 \newtheorem{thm}{Theorem}[section]
 \newtheorem{cor}[thm]{Corollary}
 \newtheorem{lem}[thm]{Lemma}
 \theoremstyle{definition}
 \newtheorem{defn}[thm]{Definition}
 \numberwithin{equation}{section}
  \newcommand{\N}{\mathbb{N}}
\newcommand{\Z}{\mathbb{Z}}
 \newcommand{\abs}[1]{\left\vert#1\right\vert}
\begin{document}
\email{dahmadi1387@gmail.com, maliheh.dabbaghian@gmail.com}
\subjclass[2010]{54H20, 37B10, 37A25}

\keywords{coded system, generator, half-synchronized system}

\title[]
{Mixing coded systems}

\author[D. Ahmadi, M. Dabbaghian]{Dawoud Ahmadi Dastjerdi, Maliheh Dabbaghian Amiri}

 %%%%%%%%%%%%%%%%%%%%%%%%%%%%%%%%%%%%%%%%%%%%%%%%%%%%%%%%%%%%%%%%%%%%%%%%%%%%%%%%%%%%%%%%%%%%%%%%%%%%%%%%%%%%%%%%%%%
 \begin{abstract}
We show that
a coded system is mixing if and only  if it is totally transitive
 and if in addition  it has a generator whose length of its elements are relatively prime, then it has strong property $P$.
 We continue by showing that a mixing half-synchronized system has such a generator. Moreover, we   give an example of a mixing coded system which does not have any generator whose length of its elements are relatively prime.
\end{abstract}
 \maketitle
%%%%%%%%%%%%%%%%%%%%%%%%%%%%%%%%%%%%%%%%%%%%%%%%%%%%%%%%%
\section*{Introduction}

A coded system $X$ is a  subshift over a finite set of alphabet $A$ having a generator set.
 That is, there exists a countable set of words $\mathcal{W}=\{w_i:\; w_i$ is an admissible word$\}$ called \emph{generator} such that any $x\in X\subseteq A^\Z$ is in the closure of the set of arbitrary concatenation of the elements of $\mathcal{W}$.
 For any coded system there is a graph $G$ with countable vertices and  edges that are labeled by the elements of $A$ and so that the closure of recording the labels of all the bi-infinite walks on $G$ determines $X$ \cite{marcus}. The converse is also true, that is if such a graph exists, then that graph represents a coded system.
Coded systems are very chaotic: the periodic points are dense and they are transitive and all have positive entropy.

Subshifts of finite type (SFT) and their factors that is sofic shifts are coded systems and are
among the most investigated subshifts.
Two known properties of sofics which are of our interest for general case are 1) A sofic system is mixing iff it is totally transitive \cite[Theorem 3.3]{bank};  2)
A sofic is mixing iff it has   a generator
$\mathcal{W}$ such that gcd$(\mathcal{W})=\text{gcd}(\{\abs{w_i}:\; w_i\in\mathcal{W}\})=1$.
We will show that (1) is valid for a general coded system but (2) fails to be so.

 The organisation of this paper is as follows. In Section \ref{gen-cod}, we will give some general results for coded subshifts and as one of our main results, we
will prove that  a coded subshift is mixing if and only if it is totally transitive.
Section \ref{Gprime} is devoted to the properties of those coded systems which have a generator
$\mathcal{W}$ such that gcd$(\mathcal{W})=\text{gcd}(\{\abs{w_i}:\; w_i\in\mathcal{W}\})=1$. 
Then we consider a subclass of mixing coded systems with a set of generator $\mathcal{W}$ whose ${\mbox{gcd}}({\mathcal{W}})=1$.
 We show that  this subclass contains the set of half-synchronized systems and all
  have the strong property $P$ in Section \ref{mix-half-syn-sec}.
 So the following implications  are  satisfied for half-synchronized systems.
\begin{eqnarray*}
\mbox{mixing } \Leftrightarrow\mbox{strong property } P &\Leftrightarrow&\\
 K\mbox{-system} \Leftrightarrow \mbox{u.p.e.}&\Leftrightarrow& \mbox{weak mixing} \Leftrightarrow \mbox{totally transitive}.
\end{eqnarray*}
In Section \ref{subsec-cod}, we will construct an example of a mixing coded system whose any generator $\mathcal{W}$ has gcd$(\mathcal{W})>1$.

\section{ Definitions and preliminaries}
A TDS is a pair $( X,\,T)$
such that $X$ is a compact metric space and $T$ is a homeomorphism.
 \emph{The return time} set is defined to be
$$N(U,\,V)=\{n\in\mathbb{Z}: U\cap T^{-n}V\neq\emptyset\}$$   where $U$ and $V$ are
opene  (nonempty  and  open)  sets.  A  TDS  $(X,\,T)$   is
\emph{transitive} if  $N(U,\,V)\neq\emptyset$; and it is
\emph{totally transitive} if $(X,\,T^n)$ is transitive for any $n\in\Z$.
We call a TDS   $(X,\,T)$
\emph{weak mixing}
 if $N(U,\,V)$ is a thick set  (i.e. containing
arbitrarily long intervals of $\Z$) for any two opene sets $U$ and  $V$;
 and
is \emph{strong mixing} if  $N(U,\,V)$ is cofinite for opene sets $U,\,V$.

%The \emph{topological entropy} of $T$ with respect to a finite open cover $\alpha$ is  $h(T,\,\alpha)=\limsup_{n\to\infty}\frac{1}{n}\log \mathcal{N}(\vee_{i=1}^nT^{-i}\alpha)$  where $\mathcal{N}(\alpha)$ denotes the
%number of sets in a finite subcover of $\alpha$ with the smallest cardinality
%and the \emph{entropy} of $T$ is $h(T ) = \sup_{\alpha} h(T,\,\alpha)$.
%Recall that  a pair $(x_1,\, x_2)\in X\times X$ is an \emph{entropy pair} if $x_1\neq x_2$ and for each closed disjoint neighborhood $U_i$ of $x_i$, the open cover $\{U^c_1,\,U^c_2\}$ has positive entropy.
%A TDS has \emph{uniform positive entropy} (u.p.e.) if any $(x_1,\, x_2)\in X\times X$ where $x_1\neq x_2$ is an entropy pair.
%\emph{Entropy tuples} are similarly defined and
%a TDS $(X,\,T)$ is u.p.e. of order $n$ if any
% tuple $(x_1,\ldots,\,x_n)\in X^n$ is an entropy tuple \cite[Definition 2.1]{huang3}. A TDS is a \emph{$K$-system} if it is u.p.e. of order $n$ for $n>1$.
%-------------------------------------------------------------------------------------

Let $A$ be a finite alphabet, i.e. a finite set of symbols. The shift map $\sigma:A^{\Z}\to A^{\Z}$
 is defined by $\sigma((a_i)_{i\in\Z}) = (a_{i+1})_{i\in\Z}$, for $(a_i)_{i\in\Z}\in A^{\Z}$. If $A^{\Z}$ is endowed
with the product topology of the discrete topology on $A$, then $\sigma$ is a homeomorphism and $(A^{\Z},\,\sigma)$ is a TDS called \emph{two-sided shift} space. Similarly, \emph{one-sided shift} space can be defined on $A^{\N_0}$,
 then $\sigma$ is  a finite-to-one continuous map.
 A \emph{subshift} is the restriction of $\sigma$ to any closed
non-empty subset $\Sigma$ of $A^{\Z}$ that is invariant under $\sigma$. Set $\sigma_\Sigma:=\sigma\vert_\Sigma$.
 A \emph{word} (\emph{block}) of length $n$ is $a_0 a_1\cdots a_{n-1}\in A^n$ if there is
 $x\in \Sigma$ such that $x_i=a_i,\, 0\leq i \leq n-1$.
 The \emph{language} $\mathcal{L}(\Sigma)$ is the  collection of  all words of $\Sigma$ and $\mathcal{L}_n(\Sigma)$ is the collection of all words in $\Sigma$ of length $n$.
   Also, a \emph{cylinder} is defined as $[a_0\cdots a_n]_p^q=\{x\in\Sigma: x_p=a_0,\ldots,\,x_q=a_n\}$.

Shift  spaces described by a finite set of forbidden  blocks are called \emph{shifts
of finite type} (SFT) and their factors are called \emph{sofic}.
A word $w\in\mathcal{L}(\Sigma)$ is called \emph{synchronizing} if  whenever $uw,\,wv\in\mathcal{L}(\Sigma)$, then $uwv\in\mathcal{L}(\Sigma)$. A \emph{synchronized system} is an irreducible shift which has a synchronizing word. 
{
A subshift $\Sigma$ is specified, or has specification property, if there is $N\in\N$ such that if $u,\; v\in\mathcal{L}(\Sigma)$, then there is $w$ of length $N$ so that 
$uwv\in\mathcal{L}(\Sigma)$. A specified system is mixing and synchronized and any mixing sofic is specified.
}

\section{General Results in Coded Systems}\label{gen-cod}
The coded systems were first defined by Blanchard and Hansel in \cite{blaH} as an extension for sofic systems.  There are several equivalent definitions and we choose the following.
\begin{defn}
A shift space is \emph{coded} if its language $\mathcal{L}$ is freely generated by concatenating  words in a countable set $\mathcal W$  called the \emph{generator} set.
We denote a coded space $X$  by $X(\mathcal{W})$.
\end{defn}

A well studied sub-class containing  sofics are  synchronized systems.
The following result was already known for sofics \cite[Theorem 3.3]{bank} and then extended for synchronized systems \cite[Proposition 4.8]{MO}.
\begin{thm}\label{coded-eq}
A coded system is mixing if and only if it is totally transitive.
\end{thm}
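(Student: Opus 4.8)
The plan is to prove the two implications separately, with essentially all of the content in the reverse direction. For the forward implication, assume $(X,\sigma)$ is (strong) mixing. Then for any opene $U,V$ the return set $N(U,V)$ is cofinite, and a cofinite subset of $\Z$ meets every infinite progression $n\Z$; since membership of $(X,\sigma^n)$'s return set is governed by $N_{\sigma^n}(U,V)=\set{k:\ nk\in N(U,V)}$, which is nonempty because $nk\in N(U,V)$ for all large $k$, each $(X,\sigma^n)$ is transitive. Hence $X$ is totally transitive. I note this direction uses nothing about the coded structure and holds for any TDS.

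For the reverse implication I first reduce mixing to a combinatorial statement about the language. Since cylinders form a basis and $N$ transforms predictably when the cylinders are shifted, mixing is equivalent to the assertion that for all $u,v\in\mathcal{L}(X)$ the gap set
\[
G(u,v)=\set{\,m\geq 0:\ uwv\in\mathcal{L}(X)\ \text{for some }w\ \text{with}\ \abs{w}=m\,}
\]
is cofinite in $\Z_{\geq 0}$. The coded structure then supplies a one-sided semigroup of gaps: because $u$ (resp.\ $v$) occurs inside some concatenation of generators, there are \emph{fixed} words $\alpha,\beta$ completing $u$ and $v$ to generator boundaries so that $\hat u Z\hat v=(p\,u\alpha)\,Z\,(\beta v\,s)$ is itself a single concatenation of generators for \emph{every} concatenation $Z$ of elements of $\mathcal{W}$. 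Reading off the subword $u\alpha Z\beta v$ gives $G(u,v)\supseteq(\abs{\alpha}+\abs{\beta})+D$, where $D=\langle\,\abs{w}:w\in\mathcal{W}\,\rangle$ is the numerical semigroup generated by the generator lengths. Writing $d=\gcd\set{\abs{w}:w\in\mathcal{W}}$, the semigroup $D$ contains every sufficiently large multiple of $d$, so each such embedding already forces all large gaps lying in one residue class modulo $d$.

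If $d=1$ this finishes the argument, so the crux is to remove the residue obstruction when $d>1$. Here total transitivity enters through the standard structure theory of transitive subshifts: a transitive subshift is totally transitive precisely when it admits no nontrivial factor onto a cyclic rotation $(\Z/p,\,+1)$ with $p>1$, equivalently when $\gcd N([u_0],[u_0])=1$ for cylinders $[u_0]$. Were every occurrence of $u$ (and of $v$) confined to a single phase modulo $d$, the assignment of the position-phase mod $d$ would descend to such a rotation factor of period $>1$, contradicting total transitivity. Hence some word must occur at two positions incongruent mod $d$; routing the connecting block through two such occurrences produces embeddings as above but with offsets $\abs{\alpha}+\abs{\beta}$ in different residue classes. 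The set of achievable offsets mod $d$ is closed under the differences so generated, hence forms a subgroup of $\Z/d$, and total transitivity forces this subgroup to be all of $\Z/d$. Combined with the inclusion of the previous paragraph, $G(u,v)$ then contains all large integers in every residue class, i.e.\ $G(u,v)$ is cofinite, giving mixing.

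I expect the main obstacle to be exactly the absence of synchronization: in a general coded system one cannot freely concatenate two arbitrary admissible words, only genuine concatenations of generators are guaranteed admissible, so every gluing step must be anchored at generator boundaries and the auxiliary words $\alpha,\beta$ must be carried along. The genuinely delicate point is the bridge in the last paragraph, namely translating the purely dynamical hypothesis of total transitivity into the combinatorial fact that all residue classes mod $d$ are realized; this is where one must argue that the cyclic ``phase'' is the \emph{only} obstruction to mixing and that total transitivity annihilates it. The example constructed later in the paper, of a mixing coded system whose every generator has $\gcd(\mathcal{W})>1$, shows why this step cannot be replaced by the naive claim that total transitivity produces a generator with $d=1$, and thus why the phase-covering argument, rather than a direct reduction to the sofic case, is required.
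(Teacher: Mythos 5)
Your forward direction and the ``semigroup of gaps'' inclusion $G(u,v)\supseteq(\abs{\alpha}+\abs{\beta})+D$ are fine, and you have correctly located the crux: realizing all residue classes mod $d$. But that step is where your argument has a genuine gap, and the mechanism you propose for it does not work as stated. The ``position-phase mod $d$'' of an occurrence of $u$ is defined only relative to a decomposition of the ambient point into generators; such a decomposition is neither unique nor available at all for points of $X$ that are limits of concatenations without being bi-infinite concatenations themselves. So the phase is not a well-defined (let alone continuous) function on $X$, and the rotation factor $(\Z/p,+1)$ with which you intend to contradict total transitivity is never actually constructed. Likewise, the asserted equivalence ``totally transitive $\Leftrightarrow\ \gcd N([u_0],[u_0])=1$'' is only easy in the direction you do not need; the direction you invoke is essentially the content of the theorem and cannot be cited as standard structure theory for merely transitive systems. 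Finally, ``total transitivity forces this subgroup to be all of $\Z/d$'' is stated, not proved: to exclude a proper subgroup $e\Z/d$ you would again need a period-$e$ rotation factor, with the same obstruction.

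The paper closes exactly this gap by a different and decisive use of the coded hypothesis: coded systems have dense periodic points, so total transitivity already upgrades to \emph{weak mixing} by the cited Huang--Ye result. Thickness of the return set then hands you, for free, connecting words $u\alpha_i v$ of $k=\gcd(\mathcal{W})$ consecutive lengths. Each such word sits inside a concatenation of generators $w_i=u_iu\alpha_i vv_i$, all of whose constituent blocks have length divisible by $k$, which forces $\abs{u_i}+\abs{v_i}\equiv i \pmod k$; the wrap-around words $vv_i(u_0)^nu_iu$, being legitimate concatenations of generators, then realize gaps in every residue class mod $k$, giving cofiniteness of $N([u],[v])$. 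If you want to salvage your outline, the missing ingredient is precisely this weak-mixing step: replace the rotation-factor argument by ``dense periodic points $+$ total transitivity $\Rightarrow$ $N(U,V)$ thick,'' and extract the $k$ consecutive gap lengths from thickness rather than from a phase-covering subgroup argument.
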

\begin{proof}
The necessity is clear. So let $X$, our coding system, be totally transitive. Since a coded system has a dense set of periodic points, $X$ is weak mixing \cite[Corollary 3.6]{huang2}.

{
Let $\mathcal{W}$ be the generator of  $X$ and
 $\gcd(\mathcal{W})=k$. First assume that there is $u_0\in\mathcal{W}$ such that $|u_0|=k$.

 To see that $X$  is mixing, it suffices to show that for any $u,\,v\in\mathcal{W},\,N([u],\,[v])$ is cofinite.
 We do this by showing that there exists some $m\in\mathbb{N}$ such that $N([u],\,[v])-|u|\supset k\mathbb{N}\cup
 (k\mathbb{N}+mk+1) \cup\cdots\cup (k\mathbb{N}+mk+k-1)$. As a result, $N([u],\,[v])-|u|$ will contain all numbers greater than $(m+1)k$ and so it is cofinite.

Since $u_0,\,u,\,v\in\mathcal{W}$, for any $n$ we have $u(u_0)^nv\in\mathcal{L}(X)$ and this implies that $k\mathbb{N}\subset N([u],\,[v])-|u|$.
But $X$ is weak mixing and  $N([v],\,[u])$ must be thick. Hence there does exist some $s\in\mathbb{N}$ such that $s-1,\,s-2,\ldots,\,s-k\in N([v],\,[u])-|v|$. This means that  $u\alpha_i v\in\mathcal{L}(X)$
where $|\alpha_i|=s-i$.

Observe that for any $i$, $u\alpha_i v$ is a subword of
\begin{equation}\label{w_i}
 w_i=u_iu\alpha_i vv_i
\end{equation}
where $w_i$ is constructed by
some concatenation of some words in
$\mathcal{W}$.  By the fact that $\gcd(\mathcal{W})=k$,  $|w_i|,\,|u|,\,|v|\in k\mathbb{N}$ and
this implies that $|u_i|+|v_i|+s-i\in k\mathbb{N}$. Without loss of generality  assume that $s\in
k\mathbb{N}$. Hence $|u_i|+|v_i|\equiv i (\mod k)$ and so there is $m_i\in\mathbb{N}$ such that $|u_i|+|v_i|=m_ik+i$.
 By considering \eqref{w_i},  for any $n\in\mathbb{N}$,   $vv_i(u_0)^nu_iu\in\mathcal{L}(X)$
and this in turn implies that $nk+m_ik+i\in N([u],\,[v])-|u|$.

 Since $n$ is arbitrary, by setting  $m=\max\{m_1,\ldots,\,m_k\}$, for any $i$ we have $k\mathbb{N}+mk+i\in N([u],\,[v])-|u|$.

Now suppose for some $l>1$, $|u_0|=lk$. Replace $u$ and $v$ with $u^l$ and $v^l$ respectively and we consider $w_i^l$ instead of $w_i$. So $|u|,\,|v|,\,|w_i|$ are in $lk\mathbb{N}$ and  by a similar reasoning as above we are done.
}
\end{proof}
{

\section{Mixing coded systems with a relatively prime generator
}\label{Gprime}
A generator $\mathcal{W}$ is called \emph{relatively prime} if gcd$\{|w_i|:\; w_i\in\mathcal{W}\}=1$.
Note  that if a coded system has a relatively prime generator, then it possesses a generator $\mathcal{W}'$ having two elements with coprime lengths.
For there is sufficiently large $k\in \N$ such that  gcd($\{|w_1|,\ldots, |w_k|\})=1$.
By B\'ezout's identity and a reindexing of  $\{w_1,\ldots, w_k\}$ if necessary, there are $y_i\in\N$ with $1=y_1|w_1|+\cdots+y_\ell |w_\ell|-(y_{\ell+1}|w_{\ell+1}|+\cdots+y_k|w_k|)$.
Now, gcd($y_1|w_1|+\cdots+y_\ell |w_\ell|,\, y_{\ell+1}|w_{\ell+1}|+\cdots+y_k|w_k|)=1$
and so we may set $$\mathcal{W}'=\mathcal{W}\cup\{w_1^{y_1}\cdots  w_\ell^{y_\ell},\, w_{\ell+1}^{y_{\ell+1}}\cdots w_k^{y_k}\}.$$
Henceforth, without loss of generality, we may assume that any relatively prime generator has two elements with coprime lengths.

 An easy implication of the following lemma is that a coded system with a relatively prime generator is mixing.
\begin{lem}\label{coprime}
Suppose  $\gcd\{a_1,\,a_2\}=1$,  $a_1,\, a_2\in\N$.
 Then there is $L\in\N$ such that for any $n\geq L$, there are $r_1,\, r_2\in\N_0=\mathbb{N}\cup\{0\}$ with $n=r_1a_1+r_2a_2$.
\end{lem}

Later in Theorem \ref{mix-cod-gcd},
we will show that there are examples of mixing coded systems which do not have any relatively prime generator.  However, a trivial argument shows that all mixing synchronized do have a relatively prime generator.
{
 In order to see this,  consider
\begin{equation}\label{gen-syn}
\mathcal{W}=\{w\alpha: \ \alpha w\alpha\in\mathcal{L}(X), \alpha\not\subset w\}
\end{equation}
as a generator for $X$
where $\alpha$ is a synchronized word;
then the assertion follows from the fact that $N([\alpha],\,[\alpha])$ is cofinite.

A TDS $(X,\/ T)$ is said to have \emph{strong property} $P$ \cite[Definition 6.2]{huang1}, if for any finite
opene sets $U_1,\,U_2,\, \ldots ,\,U_n$ in $X$ there exists $N\in \N$ such that for any $k\geq 2
$ and any $s = (s(1), s(2), \cdots , s(k))\in  \{1,\,2,\ldots ,\,n\}^k$ there exists $x \in X$ with
\begin{equation}\label{eq:sp}
 x\in U_{s(1)} \cap T^{-N}U_{s(2)}\cap \cdots \cap T^{-(k-1)N}U_{s(k)}.
\end{equation}
Clearly a TDS with specification property has strong property $P$. The converse is not true though. In fact, Blanchard in \cite[Example 5]{bl}   gives an example
 of a subshift which has u.p.e but  not mixing; so it cannot have specification property. By \cite[Proposition 4]{bl}, Blanchard's example has strong property $P$.
A consequence of this argument is that mixing in a general TDS
is different from having property P and does not imply this property.
However, a TDS with property $P$ is always weak mixing \cite{bl}. The situation is different
 for a coded system with a relatively prime generator.

\begin{thm}
\label{thm:sp}
A  coded system with a relatively prime generator  has strong property $P$.
\end{thm}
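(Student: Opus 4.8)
The plan is to exploit the unrestricted concatenation available in a coded system together with the Frobenius-type statement of Lemma~\ref{coprime}, which lets me bridge prescribed patterns by gaps of \emph{exactly} any sufficiently large length. This exactness is precisely what upgrades mixing to the uniform spacing demanded by \eqref{eq:sp}. Throughout I use the reduction, recorded before Lemma~\ref{coprime}, that the generator $\mathcal{W}$ contains two elements $w_1,w_2$ with $a_1=|w_1|$, $a_2=|w_2|$ and $\gcd(a_1,a_2)=1$.

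First I would reduce to cylinders. Since cylinders form a basis of the topology, each opene $U_i$ contains a cylinder, and after enlarging the coordinate window I may fix $M\in\N$ and words $c_1,\dots,c_n\in\mathcal{L}(X)$ of length $2M+1$ with $[c_i]_{-M}^{M}\subseteq U_i$. It then suffices to produce, for a single $N$, a point $x\in X$ displaying $c_{s(j)}$ on the window $[(j-1)N-M,\,(j-1)N+M]$ for every $j$, since this forces $T^{(j-1)N}x\in U_{s(j)}$, i.e.\ $x\in U_{s(1)}\cap T^{-N}U_{s(2)}\cap\cdots\cap T^{-(k-1)N}U_{s(k)}$.

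Next I would anchor each pattern to a generator boundary. Because $X$ is coded, every $c_i\in\mathcal{L}(X)$ is a factor of a finite concatenation of generators, so there are words $p_i,q_i$ with $p_ic_iq_i\in\mathcal{W}^{*}$; hence the block $p_ic_iq_i$ begins and ends at a generator boundary and may be concatenated freely with other blocks and with any further generators. I would then assemble $x$ as the bi-infinite concatenation of generators
$$\cdots\,p_{s(1)}c_{s(1)}q_{s(1)}\,F_1\,p_{s(2)}c_{s(2)}q_{s(2)}\,F_2\cdots p_{s(k)}c_{s(k)}q_{s(k)}\,\cdots,$$
positioned so that $c_{s(1)}$ occupies $[-M,M]$ and padded on both sides by copies of $w_1$, so that $x\in X$; here each filler $F_j$ is a concatenation of $w_1$ and $w_2$. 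The number of symbols strictly between $c_{s(j)}$ and $c_{s(j+1)}$ equals $|q_{s(j)}|+|F_j|+|p_{s(j+1)}|$, so achieving spacing $N$ requires $|F_j|=B-|q_{s(j)}|-|p_{s(j+1)}|$, where $B:=N-2M-1$.

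Finally I would choose $N$. By Lemma~\ref{coprime} there is $L\in\N$ such that every integer $\ge L$ has the form $r_1a_1+r_2a_2$ with $r_1,r_2\in\N_0$, i.e.\ is realizable as a filler length. Putting $P=\max_i|p_i|$ and $Q=\max_i|q_i|$ and taking any $N\ge L+P+Q+2M+1$ guarantees $|F_j|=B-|q_{s(j)}|-|p_{s(j+1)}|\ge L$ for every $j$, so each required filler exists, while $N>2M$ keeps the windows disjoint. Crucially this $N$ depends only on $U_1,\dots,U_n$ and not on $k$ or on $s$, which is exactly the uniformity in the definition of strong property $P$. I expect the main obstacle to be this uniformity: mixing alone would bridge each pair of patterns by \emph{some} large gap, whereas the coprime generator, via Lemma~\ref{coprime}, lets me prescribe the one common exact gap $N$ that simultaneously realizes every transition $c_{s(j)}\to c_{s(j+1)}$ for every word $s$.
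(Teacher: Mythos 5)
Your proposal is correct and follows essentially the same route as the paper: embed each target word into a finite concatenation of generators, then use the two coprime-length generator elements together with Lemma~\ref{coprime} to pad so that consecutive patterns sit at a fixed distance $N$ independent of $k$ and $s$. The only difference is bookkeeping — the paper normalizes each block $V^{(i)}$ to a common length $N$ with the word $u_i$ at a fixed offset, whereas you keep the blocks as built and insert explicit fillers of computed length between them — but the key idea (exact realizability of all sufficiently large gap lengths) is identical.
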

\begin{proof}
Let $X$ be mixing coded and $\mathcal{W}$ a generator for $X$ with $\mbox{gcd}(\mathcal{W})=1$.
First assume that  $v^{(1)}$ and $v^{(2)}$ are two words in
$\mathcal{W}$ with
   coprime lengths.
Since any concatenation of $v^{(1)}$ and $v^{(2)}$ is again in $\mathcal{L}(X)$,  by Lemma \ref{coprime},  there is $L\in\N$ such  that for  $m\geq L$
 there are some concatenations of $v^{(1)}$ and $v^{(2)}$ of length $m$.

To show that $X$ has strong property $P$, let $\mathcal{U}=(U_1,\ldots,\,U_n)$  be an $n$-tuple of nonempty
 open sets and pick  $u_i\in\mathcal{L}(X)$ such that $[u_i]\subset U_i$.
  Moreover, as a coded system there is $V^{(i)}$ a concatenation of some words in $\mathcal{W}$ such that  $u_{i}\subseteq V^{(i)}$ and let $V^{(i)}=v_{i}u_{i}w_{i}$.
We may further assume that there are $l$ and $N$ such that for all $i$,
(1) $l=|v_i|$ and (2) $N=|V^{(i)}|$. This is possible for otherwise
set $l'=L+N_0$ where $N_0=\max\{|v_i|:\  1\leq i\leq k\}$
and
 extend each $V^{(i)}$  from left by concatenation of some $v ^{(1)}$ and $v^{(2)}$ whose sum of lengths is
 $l'-|v_i|$ and call it $V^{(i)}$ again. This sets (1) and similar reasoning by adding necessary concatenation of $v^{(1)}$ and $v^{(2)}$ from right  achieves (2).
Observe that by this construction, the concatenation of any order of $V^{(i)}$'s is a word in $X$ and since
$u_i$ starts from
 entry $(l+1)$ of $V^{(i)}$, we have
$$[u_{s(1)}] \cap \sigma^{-N}[u_{s(2)}]\cap \cdots \cap \sigma^{-(k-1)N}[u_{s(k)}]\neq \emptyset$$
where $s = (s(1),\, s(2), \ldots , \,s(k))\in  \{1,\,2,\ldots ,\,n\}^k$.
By allowing $T=\sigma$,
this implies that there exists $x \in X$ such that \eqref{eq:sp}
is satisfied.
\end{proof}
By \cite[Lemma 6.3]{huang1}, any system with strong property $P$ is $K$-system. In fact, we have the following implications
$$\mbox{strong property } P \Rightarrow K\mbox{-system} \Rightarrow \mbox{u.p.e.}\Rightarrow \mbox{weak mixing} \Rightarrow \mbox{totally transitive}.$$
By the above results, all the converses are true for the coded systems with a relatively prime generator.

\subsection{Mixing half-synchronized systems}\label{mix-half-syn-sec}
As we saw all mixing synchronized systems have a relatively prime generator. We extend this result to half-synchronized systems. First let us recall the necessary definitions.

For any $x\in X$, $x_-=x_{(-\infty,\,0]}$ (resp. $x_+=x_{(0,\,\infty)}$) is called a \emph{left ray} (resp. \emph{right ray}).
 Set $X^-=\{x_-: x\in X\}$ and $X^+=\{x_+: x\in X\}$.
It is abvious that for any $u\in\mathcal{L}(X)$  and any left (resp. right) ray $x_-\in X^-$ (resp. 
$x_+\in X^+$), $x_-u$ (resp. $ux_+$) is  a  left (resp. right) ray.

Let $\eta$ be either a word or a left ray. The \emph{follower set} of $\eta$ is defined as
$\omega_+(\eta)=\{x_+\in X^+:\; \eta x^+ \  \mbox{is admissible}\}$. The \emph{follower block} of $\eta$  is denoted by $F_+(\eta)=\{u\in \mathcal{L}(X):\; \eta u \ \mbox{is admissible}\}$.
Similarly one may define the \emph{precessor set} $\omega_-(\zeta)$ or \emph{precessor block}  $F_-(\zeta)$ of a word or a right ray $\zeta$.

\begin{defn}\label{half-syn-defn}\cite[Definition 0.9]{fiebig}
A transitive subshift $X$ is \emph{half-synchronized} if  there is  $m\in\mathcal{L}(X)$,
 called the \emph{half-synchronizing} word of $X$, and a left transitive ray $x_-\in X$ such that $x_{-|m|+1}\cdots x_0=m$ and $\omega_+(x_-)=\omega_+(m)$.
\end{defn}
One can change the condition $F_+(x_-)=F_+(m)$ with
$\omega_+(x_-)=\omega_+(m)$ in Definition \ref{half-syn-defn}.
Also, similar definition for a right transitive ray can be given.
\begin{thm}\label{half-syn-mix}
A half-synchronized system is mixing if and only if it has a relatively prime generator.
\end{thm}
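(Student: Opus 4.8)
The plan is to prove the two implications separately, reducing the converse to results already in hand and treating the forward implication (mixing $\Rightarrow$ relatively prime generator) as the substantial one. For the converse, suppose $X$ has a relatively prime generator. Then $X$ is coded with a relatively prime generator, and Lemma~\ref{coprime} gives at once that such a system is mixing; so this direction needs no further work.

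For the forward implication, let $X$ be mixing and half-synchronized, and fix a half-synchronizing word $m$ together with a left transitive ray $x_-$ as in Definition~\ref{half-syn-defn}, so that $\omega_+(x_-)=\omega_+(m)$. Since $[m]$ is opene and $X$ is mixing, $N([m],[m])$ is cofinite; this is the analogue, for the half-synchronizing word, of the fact about a synchronizing word used for the generator \eqref{gen-syn}. The idea is to build a generator from the blocks running between two consecutive \emph{full-future} occurrences of $m$, i.e. occurrences after which the follower set is all of $\omega_+(m)$; concretely I would set
$$\mathcal{W}=\{\,b=wm:\ mwm\in\mathcal{L}(X)\ \text{and}\ \omega_+(mwm)=\omega_+(m)\,\}.$$
The first task is to verify free generation, namely that every concatenation $m\,b_1 b_2\cdots b_k$ with $b_i\in\mathcal{W}$ is admissible. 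This is where the half-synchronizing hypothesis takes the place of genuine synchronization: although $m$ need not be synchronizing, the identity $\omega_+(x_-)=\omega_+(m)$ yields the one-sided ``forgetting'' property $\omega_+(yu)=\omega_+(mu)$ whenever $\omega_+(y)=\omega_+(m)$. Since each $b_i$ ends in $m$ and satisfies $\omega_+(mb_i)=\omega_+(m)$, an induction propagates the equality $\omega_+(\,\cdot\,)=\omega_+(m)$ along the concatenation, so each $m\,b_1\cdots b_k$ is admissible with full follower set; reading words left to right, a one-sided synchronization at $m$ is exactly what is needed.

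The harder task, and the step I expect to be the main obstacle, is density: every $u\in\mathcal{L}(X)$ must occur inside some concatenation of elements of $\mathcal{W}$, and for this one must place $u$ between two full-future occurrences of $m$. Mixing supplies the connecting blocks, since $N([m],[u])$ and $N([u],[m])$ cofinite produce a word $m\cdots u\cdots m\in\mathcal{L}(X)$; the delicate point is that the trailing occurrence of $m$ so obtained need not be full-future. One must therefore argue, combining the transitivity of $x_-$ with mixing, that from \emph{any} occurrence of $m$ a full-future occurrence can be reached after a bounded extension to the right. Making this ``bounded delay to a full-future occurrence'' uniform is the technical heart of the proof; everything else follows the synchronized template almost verbatim. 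Granting it, $\mathcal{W}$ generates and $X=X(\mathcal{W})$.

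Finally, to see that $\mathcal{W}$ is relatively prime I would extract two coprime lengths from $N([m],[m])$. Because full-future occurrences of $m$ can be reached at a cofinite set of relative positions, up to the bounded delay above, the set $\{|b|:b\in\mathcal{W}\}$ of gaps between consecutive full-future occurrences contains two consecutive integers $n,n+1$; as $\gcd(n,n+1)=1$, this forces $\gcd\{|b|:b\in\mathcal{W}\}=1$, exactly as in the synchronized case. Hence a mixing half-synchronized system carries a relatively prime generator, completing the forward implication and, with Theorem~\ref{thm:sp}, placing such systems inside the subclass with strong property $P$.
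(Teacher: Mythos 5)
Your treatment of the easy direction and of free generation is sound: the observation that $\omega_+(mwm)\subseteq\omega_+(m)$ holds automatically and that the equality $\omega_+(\,\cdot\,)=\omega_+(m)$ propagates under concatenation is exactly the role half-synchronization plays. But the two load-bearing steps are left unproven. You explicitly defer the density step (``Granting it, $\mathcal{W}$ generates'') to the claim that from any occurrence of $m$ a \emph{full-future} occurrence is reachable within a uniformly bounded extension to the right, and your coprimality argument (extracting two consecutive lengths from $N([m],[m])$) rests on that same claim. Nothing in the hypotheses obviously yields such a uniform bound -- half-synchronization only controls the follower set of one infinite left ray, not of finite words reached from arbitrary positions -- and you give no argument for it. As written, the proof therefore has a genuine gap at what you yourself identify as its technical heart.

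The paper circumvents this entirely by anchoring the generator in the transitive left ray: $\mathcal{W}=\{wm:\ mwm=x_{-n}\cdots x_0 \text{ is a suffix of } x_-\}$. With that choice density is immediate (every word of $\mathcal{L}(X)$ occurs in $x_-$ by left transitivity, hence inside some suffix $mwm$, since $m$ recurs in $x_-$), and each element automatically has your full-future property because $\omega_+(x_-)\subseteq\omega_+(mwm)\subseteq\omega_+(m)=\omega_+(x_-)$ -- so your free-generation induction applies verbatim. For coprimality the paper argues by contradiction rather than by exhibiting consecutive lengths: if $\gcd(\mathcal{W})=k\geq 2$, cofiniteness of $N([m],[m])$ gives words $mu_im$ with $|u_im|\equiv i\pmod{k}$; locating $mu_im$ inside $x_-$ and extending rightward to coordinate $0$ produces $v_im$ and $u_imv_im$ both in $\mathcal{W}$, forcing $|u_im|=|u_imv_im|-|v_im|\in k\N$, a contradiction. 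Your plan would work if you could realize the return words coming from $N([m],[m])$ as generator elements, but that is precisely what requires placing them inside $x_-$ (or proving your bounded-delay claim). Replacing the ``full-future occurrence'' definition with the suffix-of-$x_-$ definition closes the gap and lets the rest of your sketch go through.
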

\begin{proof}
The sufficiency is obviously true. To prove the necessity,  we first define a generator $\mathcal{W}$ for a given
half-synchronizing system and  we will show that if the system is   mixing, then  $\mathcal{W}$ is  relatively prime.
  Let $(X,\,\sigma)$ be a half-synchronized system and let $x$ and $m$ be  as provided in Definition \ref{half-syn-defn}.
 Define
\begin{equation}\label{gensyn}
\mathcal{W}=\{wm: \ mwm=x_{-n}\cdots x_0\subset x_{(-\infty,\,0]} \mbox{ for some } n\}.
\end{equation}

 Since $x$ is left transitive, all words of $\mathcal{L}(X)$ appear infinitely many times as asubword in $x_-$, in other words, $\mathcal{L}(X)\subseteq \mathcal{L}(\Sigma(\mathcal{W}))$.

We prove our assertion by showing that  $\mathcal{L}(\Sigma(\mathcal{W}))\subseteq \mathcal{L}(X)$.
This is done by an induction argument for showing that all concatenations of members of $\mathcal{W}$ are in $\mathcal{L}(X)$.
It is trivially true for
 any two concatenations.
So assume that this is true for $k-1$ concatenations and consider a $k$ concatenations of members of $\mathcal{W}$ such as
 $w_{i_1}mw_{i_2}m\cdots w_{i_k}m$. By assumption $mw_{i_2}m\cdots w_{i_k}m$ is admissible; so $w_{i_2}m\cdots w_{i_k}m\in\omega_+(m)=\omega_+(x_-)$.
But $w_{i_1}$ appears as a subword in the end of $x_-$, thus we have $w_{i_1}mw_{i_2}m\cdots w_{i_k}m\in\mathcal{L}(X)$ and so $\mathcal{L}(\Sigma(\mathcal{W}))\subseteq \mathcal{L}(X)$ as required.

Now suppose that $X$ is mixing
and assume $\gcd(\mathcal{W})=k\geq 2$. So for $wm\in\mathcal{W}$,
\begin{equation}\label{wmkN}
 |wm|\in k\N.
\end{equation}
 But
  $N([m],\,[m])$ is cofinite; so
 for a fixed $1\leq i\leq k-1$, there is $u_i\in\mathcal{L}(X)$ such that $mu_im\in\mathcal{L}(X)$ and $|u_im|\in k\mathbb{N}+i$.
 By left transitivity of $x$, there is  some $t_i\in\mathbb{N}$ such that $mu_im=x_{-t_i}\cdots x_{-t_i+|mu_im|}$.
 On the other hand,   definition of $\mathcal{W}$ implies that there are some $v_i\in\mathcal{L}(X)$ so that $v_im,\,u_imv_im\in\mathcal{W}$
 and from (\ref{wmkN}), both $|v_im|$ and
 so $|u_imv_im|$ must be in $k\mathbb{N}$. However, $|u_imv_im|=|u_im|+|v_im|\in k\mathbb{N}+i$
and so our assumption is false and $k=1$ as required.
\end{proof}
%}

\begin{cor}
A half-synchronized system is mixing if and only if it has strong property $P$.
\end{cor}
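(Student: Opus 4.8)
The plan is to chain together the results already established, since the corollary carries no new content of its own. The key preliminary observation, which makes all the coded-system machinery applicable, is that a half-synchronized system is itself coded: the set $\mathcal{W}$ defined in \eqref{gensyn} is a generator for $X$, because the argument in the proof of Theorem \ref{half-syn-mix} establishes both inclusions $\mathcal{L}(X)\subseteq\mathcal{L}(\Sigma(\mathcal{W}))$ and $\mathcal{L}(\Sigma(\mathcal{W}))\subseteq\mathcal{L}(X)$, hence $X=X(\mathcal{W})$. With this in hand, both directions of the equivalence reduce to invoking earlier theorems.

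For the necessity, suppose $X$ is a mixing half-synchronized system. By Theorem \ref{half-syn-mix}, being mixing it possesses a relatively prime generator. Theorem \ref{thm:sp} then applies directly to this coded system with a relatively prime generator and yields that $X$ has strong property $P$. This is the shorter of the two directions and requires only the two cited theorems in succession.

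For the sufficiency, suppose $X$ has strong property $P$. I would appeal to the chain of implications displayed immediately after Theorem \ref{thm:sp}, namely that strong property $P$ implies that $X$ is a $K$-system, hence u.p.e., hence weak mixing, hence totally transitive. Since $X$ is coded (by the preliminary observation above), Theorem \ref{coded-eq} gives the equivalence of total transitivity and mixing, so $X$ is mixing. This closes the loop.

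I do not anticipate a genuine obstacle, as the statement is a formal consequence of Theorems \ref{coded-eq}, \ref{thm:sp}, and \ref{half-syn-mix}. The only step deserving care is the one I have isolated as a preliminary: verifying that a half-synchronized system is coded, so that the coded-system results are legitimately available. Everything else is bookkeeping along the stated implications.
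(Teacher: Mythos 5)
Your proposal is correct and follows essentially the same route as the paper: mixing $\Rightarrow$ relatively prime generator (Theorem \ref{half-syn-mix}) $\Rightarrow$ strong property $P$ (Theorem \ref{thm:sp}), and conversely strong property $P$ $\Rightarrow$ weak mixing $\Rightarrow$ totally transitive $\Rightarrow$ mixing via Theorem \ref{coded-eq}. Your explicit preliminary check that a half-synchronized system is coded is a reasonable piece of care that the paper leaves implicit, but it does not change the argument.
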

\begin{proof}
A system with strong property $P$ is weakly mixing and thus totally
transitive
and in fact mixing  by Theorem \ref{coded-eq}.
The converse follows from Theorem \ref{half-syn-mix} and Theorem \ref{thm:sp}.
\end{proof}

The generator given in (\ref{gen-syn}) is a handy one for synchronized systems. For a half-synchronized system we have:
\begin{thm}\label{con-gen}
Fix a word $m$ and let $\mathcal{U}$ be the collection of all words
 such that  whenever $u=u_1mu_2\in \mathcal{U}$, we have $u_2\in \mathcal{U}$. Set $\mathcal{W}=\{um: u\in\mathcal{U}\}$; then
 $X=X(\mathcal{W})$ is  half-synchronized with a half-synchronizing word $m$.
\end{thm}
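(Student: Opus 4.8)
The plan is to verify the three ingredients of Definition \ref{half-syn-defn} directly for $X=X(\mathcal{W})$: that $X$ is transitive with $m\in\mathcal{L}(X)$, that there is a left transitive ray ending in $m$, and that this ray has the same follower set as $m$. Since $X$ is coded it is transitive, and since every generator $um\in\mathcal{W}$ terminates in $m$, the word $m$ lies in $\mathcal{L}(X)$. I would first produce the required ray: enumerate $\mathcal{L}(X)=\{a_1,a_2,\dots\}$, choose for each $j$ a finite concatenation $W_j$ of generators containing $a_j$ as a subword (possible, as every word of $\mathcal{L}(X)$ is a subword of such a concatenation), and set $x_-=\cdots W_3W_2W_1$. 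As a left-infinite concatenation of generators this is an element of $X^-$; it is left transitive because $a_j$ occurs in $W_j$; and it ends in $m$ because $W_1$ ends with a generator, hence with $m$. A shift of indices then arranges $x_{-|m|+1}\cdots x_0=m$.

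Next I would establish $\omega_+(x_-)=\omega_+(m)$, working with the follower blocks $F_+$ by the equivalence noted after Definition \ref{half-syn-defn}. The inclusion $F_+(x_-)\subseteq F_+(m)$ is automatic because $m$ is a suffix of $x_-$. The content is the reverse inclusion, which I would reduce to the following key point: \emph{if $v\in F_+(m)$, that is $mv\in\mathcal{L}(X)$, then $v$ is a prefix of a concatenation of elements of $\mathcal{W}$}. Granting this, since $x_-$ is itself a left-infinite concatenation of generators ending in $m$, appending the relevant concatenation $w'_1w'_2\cdots$ on the right of $x_-$ extends it to a point of $X$, of which $x_-v$ is a subword; hence $v\in F_+(x_-)$. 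This yields $F_+(m)\subseteq F_+(x_-)$, so $\omega_+(x_-)=\omega_+(m)$ and $X$ is half-synchronized with half-synchronizing word $m$.

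To prove the key point I would take a concatenation $C=w_{i_1}\cdots w_{i_p}$ of generators containing $mv$, locate the distinguished occurrence of the prefix $m$ inside $C$, and show that the remainder of $C$ following this occurrence is again a concatenation of generators (so that $v$, a prefix of that remainder, is a prefix of a concatenation). If the occurrence of $m$ ends at a generator boundary, the remainder is literally $w_{i_{r+1}}\cdots w_{i_p}$ and there is nothing to prove. If it lies internally in a single generator $w_{i_r}=um$, then writing $u=u_1mu_2$ for this occurrence, the closure hypothesis on $\mathcal{U}$ gives $u_2\in\mathcal{U}$, hence $u_2m\in\mathcal{W}$, and the remainder is $u_2m\,w_{i_{r+1}}\cdots w_{i_p}$, again a concatenation of generators. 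This is exactly the role of the defining property of $\mathcal{U}$: an occurrence of $m$ "resets" the parsing into generators, so that the continuation after $m$ depends only on $m$ and not on what precedes it.

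The hard part will be this resetting lemma, and inside it the occurrences of $m$ that straddle a generator boundary, which can arise only when $m$ overlaps itself. For such an occurrence the tail of $C$ is a proper suffix of $m$ followed by full generators, and one must still realize it as a prefix of a concatenation of elements of $\mathcal{W}$; controlling this case is where the closure hypothesis on $\mathcal{U}$ must be invoked most carefully, possibly together with a judicious choice among the several occurrences of $m$ available in $C$ so as to reduce to a boundary or internal occurrence. Once every occurrence of $m$ is shown to reset the parsing, the chain $F_+(m)\subseteq\{\text{prefixes of concatenations of generators}\}\subseteq F_+(x_-)\subseteq F_+(m)$ closes, and Theorem \ref{con-gen} follows.
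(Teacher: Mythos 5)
Your overall route is the same as the paper's: take $x_-$ to be a left transitive one-sided concatenation of generators (so it ends in $m$), get $\omega_+(x_-)\subseteq\omega_+(m)$ for free since $m$ is a suffix of $x_-$, and prove the reverse inclusion by showing that whatever can follow an occurrence of $m$ can follow $x_-$. The paper disposes of that reverse inclusion in one line (``since any concatenation of $\mathcal{W}$ follows $x_-$, $\omega_+(m)\subset\omega_+(x_-)$''), which only accounts for occurrences of $m$ sitting at the end of a generator; your ``resetting lemma'' is exactly the missing content, and your treatment of the boundary case and of the internal case (via the closure hypothesis $u=u_1mu_2\in\mathcal{U}\Rightarrow u_2\in\mathcal{U}$) is correct.

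However, the straddling case you flag as ``the hard part'' is a genuine gap, and it cannot be closed from the stated hypotheses: when $m$ overlaps itself the resetting lemma is false, and so is the theorem. Take $m=aa$ and $\mathcal{U}=\{ba\}$; the closure hypothesis holds vacuously because $ba$ contains no occurrence of $aa$, so $\mathcal{W}=\{baaa\}$ and $X$ is the periodic orbit of $(baaa)^\infty$. Here $a\in F_+(m)$, since $aaa\in\mathcal{L}(X)$ (this comes from the occurrence of $aa$ at positions $1,2$ of $baaa$, which straddles into the trailing $m$), but $a\notin F_+(x_-)$ for $x_-=\cdots baaabaaa$, because $x_-a$ would contain $aaaa\notin\mathcal{L}(X)$; the only other left ray ending in $aa$, namely $\cdots aabaa$, fails symmetrically (it cannot be followed by $b$ although $aab\in\mathcal{L}(X)$). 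Thus $\omega_+(m)\not\subseteq\omega_+(x_-)$ for every admissible ray and $m$ is not half-synchronizing for this $X$. The statement needs an extra hypothesis --- for instance that $m$ is unbordered, i.e.\ no proper nonempty prefix of $m$ is also a suffix of $m$ --- which rules out all straddling occurrences and makes your two cases exhaustive; with that added, your argument goes through completely. Your instinct to isolate this case, which the paper's own proof passes over silently, was exactly right.
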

\begin{proof}
Choose a left transitive point  $x_-$ which is a
 concatenation of words in $\mathcal{W}$. Thus $x_{[-|m|+1\cdots 0]}=m$ and we have
 $\omega_+(x_-)\subset\omega_+(m)$. On the other hand, since any concatenation of $\mathcal{W}$ follows $x_-$, $\omega_+(m)\subset\omega_+(x_-)$ and consequently $\omega_+(m)=\omega_+(x_-)$.
\end{proof}

The converse to the above theorem is also correct, that is,
 if $m$ is a half-synchronizing word, then there is $\mathcal{U}$ satisfying the hypothesis of the above theorem.
 For let $X$ be half-synchronized and $\mathcal{W}$ its generator as (\ref{gensyn}).
 Now this generator is the same as the one provided in Theorem \ref{con-gen} by letting $\mathcal{U}=\{w: wm\in\mathcal{W}\}$.

Note that  we are not assuming that whenever $u=u_1mu_2\in \mathcal{U}$, then $u_1\in\mathcal{U}$.
 Hence, using Theorem \ref{con-gen}, one can construct examples of \emph{strictly half-synchronized} system, that is systems which are half-synchronized but not synchronized.

\section{Coded mixing systems without relatively prime generator}\label{subsec-cod}

We give a collection of examples of coded mixing  systems without any reatively prime generator.
\begin{thm}\label{mix-cod-gcd}
For any $k\in\N$ there is a coded mixing  system such that if $\mathcal{W}$ is any generator, then gcd$(\mathcal{W})\geq k\geq 2$.
\end{thm}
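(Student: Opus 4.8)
The plan is to construct, for each $k \geq 2$, a coded mixing system whose structure forces every generator to have gcd at least $k$. The key tension to exploit is that mixing (by Theorem \ref{coded-eq} and Theorem \ref{thm:sp}) is a global dynamical property that can be arranged independently of the arithmetic of word-lengths, whereas having a relatively prime generator is an arithmetic constraint; I want a system where the lengths of \emph{all} admissible building blocks are trapped in a single residue pattern modulo $k$. My first move would be to fix $k$ and design a generator set $\mathcal{W}$ directly, choosing the words so that $|w| \equiv 1 \pmod{k}$ (or lie in some fixed coset) for every $w \in \mathcal{W}$, which immediately gives $\gcd(\mathcal{W}) \mid k$ but more importantly constrains the length spectrum of the whole language.

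The heart of the construction should be to arrange that every admissible word of $X$ has length congruent to a fixed multiple of its ``syllable count'' modulo $k$ — in other words, that the language carries a $\mathbb{Z}/k\mathbb{Z}$-valued grading for which concatenation is additive. Concretely I would look for an alphabet and a marker scheme (say, a distinguished symbol or short pattern that must occur exactly once per generator and cannot occur inside the free interior of any block) so that from any $x \in X$ one can unambiguously count the number of generator-boundaries in a window, and so that the number of such markers in a block determines its length modulo $k$. If every occurrence of the marker is spaced so that consecutive markers are separated by a distance $\equiv 0 \pmod k$ (or by a fixed nonzero residue), then any candidate generator $\mathcal{W}'$ must consist of words whose lengths also respect this spacing, because a generator word is itself a concatenation-unit of $X$ and hence an admissible word whose endpoints sit at prescribed residues. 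This would force $|w'| \in k\mathbb{N}$ for every $w' \in \mathcal{W}'$, giving $\gcd(\mathcal{W}') \geq k$.

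To guarantee mixing I would keep the generator rich enough: I want two generator words whose lengths are coprime \emph{after} rescaling by $k$, i.e. lengths $ka_1$ and $ka_2$ with $\gcd(a_1,a_2)=1$, so that by Lemma \ref{coprime} all sufficiently large multiples of $k$ are realized as concatenation-lengths, and then a weak-mixing/thickness argument as in Theorem \ref{coded-eq} upgrades this to genuine strong mixing of $(X,\sigma)$ (note mixing need not coincide with relatively-prime word-lengths, since the shift moves by one symbol at a time and the return-time sets can still be cofinite even though all block-lengths are divisible by $k$). The delicate point is that mixing of the shift is about $N([u],[v])$ being cofinite in $\mathbb{Z}$, which the marker construction does \emph{not} obstruct, precisely because the shift index ranges over all of $\mathbb{Z}$ while the generator-lengths only control where markers may legally appear.

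The main obstacle I anticipate is the rigidity claim: showing that \emph{every} generator — not just the one I build — must have gcd at least $k$. A generator is only required to freely generate the language by concatenation, so a priori someone could use very long, irregular words that cross many of my markers and thereby evade the residue constraint. To close this I would need a genuine invariant of the system rather than a property of one particular generator: I would argue that the marker occurrences partition any bi-infinite point into blocks whose lengths are forced into $k\mathbb{N}$, and that any generator word, being an admissible finite factor appearing between two legal concatenation points, must itself have length in $k\mathbb{N}$ — if it did not, one could concatenate copies to produce an admissible word violating the marker-spacing congruence, a contradiction. Making this airtight — that the marker is intrinsic to the subshift $X$ and visible in every admissible point, so no clever generator can hide or straddle it — is where the real work lies, and I would spend most of the proof verifying that the marker pattern is both forced to recur (for mixing/transitivity) and forced to respect the modulus (for the gcd lower bound) simultaneously.
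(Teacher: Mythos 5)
Your proposal correctly isolates the tension and even contains, in embryo, the key reduction (concatenate copies of a putative generator word to derive a contradiction), but the invariant you propose to exploit cannot exist in a mixing system, and the construction is never actually produced. If your ``marker'' is a genuine word $\mu\in\mathcal{L}(X)$ forced to occur in every point with all mutual gaps congruent to $0$ modulo $k$, then $N([\mu],[\mu])\subseteq k\mathbb{Z}$, which contradicts mixing outright; so the marker cannot be intrinsic in the sense you need, and any congruence phrased in terms of occurrences of words in arbitrary admissible points is self-defeating. The step ``any generator word must sit between two legal concatenation points at prescribed residues'' is likewise false for an arbitrary generator, since concatenation points are an artifact of a chosen generator, not of the subshift. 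This is exactly the gap you flag as ``where the real work lies,'' and it is the entire content of the theorem.

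The paper's resolution is to make the congruence an invariant of the \emph{periodic points only}: since $w^\infty\in X$ for every $w$ in any generator $\mathcal{W}$, it suffices to build a mixing coded system all of whose periodic points have period divisible by $k$ --- a much weaker constraint than yours, and one compatible with mixing because words of every length modulo $k$ remain admissible and return-time sets stay cofinite. Concretely, for $k=2$ the system $S$ is presented by a countable labeled graph with vertex set $\mathbb{Z}$, an edge $b\to b+1$ labeled $x_b$ and an edge $b+1\to b$ labeled $y_b=x_b+2$, where $x$ is a point of a nontrivial mixing \emph{minimal} subshift $X$. Every closed path has even length (equal numbers of right and left steps), and a periodic label sequence $u^\infty$ not arising from a closed path would force $x(u)^\ell$ to be admissible in $X$ for all $\ell$, giving a periodic point in the minimal system $X$ --- absurd. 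Hence every generator word has even length, while mixing of $S$ is inherited from mixing of $X$ by connecting paths in the graph. The walk's position in $\mathbb{Z}$ plays the role of your grading, but crucially it is not readable from any finite window of the label sequence, which is precisely why it constrains periodic points without constraining $N([u],[v])$. Your proposal, as written, supplies no mechanism with this property.
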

\begin{proof}
Assume that $k=2$ and choose $X$ to be a non-trivial two sided mixing minimal subshift in $\{0,\,1\}^\Z$ and let $x=\{x_b\}_{b\in\Z}\in X$.
 Let $S\subset \{0,\, 1,\,2,\,3\}^\Z$ be the coded system
given by the cover $\mathcal{G}$ in
Figure \ref{fig1} where $y_b=x_b+2$. 
Denote by  $\pi_u$ a path 
labeled $u=u_0u_1\cdots u_{\abs{u}-1}$
in $\mathcal{G}$
  and let $m_u$ (resp.  $M_u$) be the integer assigned to  the most left (resp. right) vertex appearing in $\mathcal{G}$ for $\pi_u$ including the initial and terminal vertices. 
For instance, $\pi_{x_0x_1y_1}$ in Figure \ref{fig1} has $m_{x_0x_1y_1}=0$ and $M_{x_0x_1y_1}=2$.
Set $\pi_{x(u)}$ to be the path labeled
 $x(u)=x_{m_u}x_{m_u+1}\cdots x_{(M_u-1)}\in\mathcal{L}(X)$ initiating at $m_u$.
Observe that $\{y_b\}_{b\in\Z}$ with obvious correction on indices defines a minimal subshift $Y\subset \{2,\,3\}^\Z$ conjugate to $\sigma_X^{-1}$. Let $y(u)=y_{(M_u-1)}y_{(M_u-2)}\cdots y_{m_u}\in\mathcal{L}(Y)$
and note that $x(u)$ and $y(u)$ are not necessarily subwords of $u$, but if $\pi_u$ and $\pi_{u'}$ are different paths both labeled by $u$, then
\begin{equation}\label{mosavi}
x(u)=x(u')\qquad \text{and} \qquad y(u)=y(u').
\end{equation}
\begin{figure}[htbp]
\centering
$\cdots \xymatrix{{\SMALL-2} &{\SMALL-1} \ltwocell_{x_{-2}}^{y_{-2}}{'} & {\SMALL 0}\ltwocell_{x_{-1}}^{y_{-1}}{'} & 
{\SMALL 1}\ltwocell_{x_{0}}^{y_{0}}{'} &{\SMALL 2} \ltwocell_{x_{1}}^{y_{1}}{'} & {\SMALL 3}\ltwocell_{x_{2}}^{y_{2}}{'} & {\SMALL 4}\ltwocell_{x_{3}}^{y_{3}}{'}}\cdots $
\caption{Cover $\mathcal{G}$ for $S$.}\label{fig1}
\end{figure}
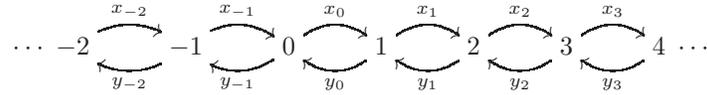
First, we show that $S$ is mixing.
Let $u,\,u'\in\mathcal{L}(S)$ and by necessary prolongations, we may
assume that if $\pi_u$ (resp. $\pi_{u'}$) is a path labeled $u$ (resp. $u'$), then $\pi_u$ (resp. $\pi_{u'}$) starts at $m_u$ (resp. $m_{u'}$) and terminates at $M_u$ (resp. $M_{u'}$), or in fact, $t(\pi_u)=\tau_{x_{(M_u)-1}}$
 (resp. $i(\pi_{u'})=\imath_{u'_m}$).
Since $X$ is mixing, there exists $K$ and paths labeled $w_i\in\mathcal{L}(X)$, $\abs{w_i}\in\{K,\,K+1,\ldots\}$ in $\mathcal{G}$ so that $\pi_{uw_iu'}$ is again a path in $\mathcal{G}$.
This means  $uw_iu'\in \mathcal{L}(S)$ or equivalently $S$ is mixing.

Now we show that if $\mathcal{W}=\{w_0,\,w_1,\ldots\}$ is any generator for $S$, then gcd$(\mathcal{W})\geq 2$.
Since $w_i^\infty\in S$, we do this by showing that if $u^\infty$ is any periodic point in $S$, then $\pi_u$ is a cycle in $\mathcal{G}$ and so $\abs{u}$ as well as $\abs{w_i}$
 are  even numbers. In particular, by assuming $u$ being in its least period $\left((u_1\subseteq u, \;
  u_1^\infty=u^\infty) \Rightarrow u_1=u\right)$, then
 $\abs{(x(u))^\ell}=\abs{x(u)}$ is constant for all $\ell$.

 If $u^\infty\in S$  is not the image of any closed cycle in $\mathcal{G}$, then for some sufficiently large $\ell_0$ and for all $\ell\geq \ell_0$, there is a path $\pi_{u^\ell}$
 in $\mathcal{G}$ with $\sup\{\abs{x(u^\ell)}:\; \ell\geq\ell_0\}=\infty$.
By choosing an appropriate subsequence, we may assume that $\lim_\ell \abs{x({u^\ell})}\nearrow\infty$.

Without loss of generality assume that  $\pi_u$ starts at its respective
 $m_u$ and $u$ is in its least period. 
By these assumptions, $x({uu})=x(u)x(u)$ and so $x(u^\ell)$  is a word in $X$ for $\ell\geq\ell_0$.  
This in turn means that $X$ has to have a periodic point $x(u)^\infty$ which  is absurd for a minimal system such as $X$.

Now for an arbitrary $k\geq 2$, let $X$ be as above, but  $\{x_i\}_{i\in\Z}$  the orbit of a point $x\in X$ under $\sigma_X^{k-1}$.
\end{proof}

Note that any half-synchronized subsystem of $S$ in the above theorem is not mixing. In particular, one can choose non-mixing increasing SFT's $X_1\subset X_2\subset\cdots$ such that $X=\overline{\cup_{i\in\N}X_i}$. That establishes an example of non-mixing SFT's converging to a mixing coded system.  
{
See also \cite[Example 4.11]{MO}, for non-mixing SFT's approximating a two sided full shift.
}

Also,  it is not hard to see that the examples of Theorem \ref{mix-cod-gcd} all have strong property P. So the converse of Theorem \ref{thm:sp} is not necessary true. 
%%%%%%%%%%%%%%%%%%%%%%%%%%%%%%%%%%%%%%%%%%

%%%%%%%%%%%%%%%%%%%%%%%%%%%


\begin{thebibliography}{99}
\bibitem{our}
D. Ahmadi and M. Dabbaghian, \textit{Characterization of spacing shifts with positive topological entropy}, Acta Math. Univ. Comenianae, Vol. LXXXI, 2 (2012),  221--226.

\bibitem{bank}
J. Banks, T. T. D. Nguyen, P. Oprocha, B. Stanley and B. Trotta, \textit{Dynamics of  spacing  shifts}, American Institute of Mathematical Sciences, \textbf{33}(9), (2013),  4207--4232.

\bibitem{ber}
 V. Bergelson, N. Hindman and R. McCutchen, \textit{Notions
of size and combinatorial properties of quotient sets in
semigroups},  Topology  Proceedings,  Vol.  \textbf{23},  (1998),  23--60.

\bibitem{bl}
F. Blanchard, \textit{Fully positive topological entropy and topological mixing}, Contemp. Math. \textbf{135},(1992),  95--105.
\bibitem{blaH}
F. Blanchard, G. Hansel,  \textit{Syst\`emes cod\'es}, Theoret. Computer Sci., \textbf{44}, (1986), 17-49.
\bibitem{bl-mas}
F. Blanchard, A. Maass, \textit{Topics in symbolic dynamics and applications}, Cambridge University Press, (2000).
\bibitem{fiebig}
D. Fiebig and   U. Fiebig, \textit{Covers for coded systems}, Contemporary Mathematics, \textbf{135}, (1992), 139--179.

\bibitem{renewal}
S. Hong and S. Shin, \textit{The entropies and  periods of renewal systems}, Israel Journal of  Mathematics \textbf{172}, (2009), pp. 9--27.


\bibitem{huang3}
W. Huang and X. Ye, \textit{ A Local variational relation and applications}, Israel  Journal of  Mathematics \textbf{151}, (2006), 237--279.
\bibitem{huang2}
W. Huang and X. Ye, \textit{ Dynamical systems disjoint from any minimal system}, Transactions of the  American Mathematical Society
\textbf{357} 2, (2004),  669--694.

\bibitem{huang1}
W. Huang and X. Ye, \textit{Topological complexity, return times and
weak disjointness}, Ergod. Th. $\&$ Dynam. Sys. \textbf{24}, (2004), 825--846.

\bibitem{D-K}
D. Kwietniak, \textit{Topological entropy and distributional chaos in hereditary shifts with applications to spacing shifts and beta shifts}, Discrete Contin. Dyn. Syst, no. 6 (2013), 2451--2467.
\bibitem{ave-sh}
D.  Kwietniak and P. Oprocha, \textit{A note on the average shadowing property for expansive maps}, Topology and its Applications, \textbf{159}, (2011), 19--27.


\bibitem{lau}
K. Lau, A. Zame, \textit{On weak mixing of cascades}, Math. Systems Theory \textbf{6}, (1973),
307--311.
\bibitem{marcus}
D. Lind and B. Marcus, \textit{An introduction to symbolic dynamics and coding}, Cambridge Univ.
Press, (1995).

\bibitem{mac}
 R. McCutcheon,  \textit{Three results in recurrence}, Ergodic Theory and its Connections with
Harmonic Analysis (Alexandria, 1993), 349--358, London Math. Soc. Lecture Note Ser., \textbf{205},
Cambridge Univ. Press, Cambridge, (1995).

\bibitem{peter}
 K. Petersen, \textit{On the topological  entropy  of saturated  sets}, Ergodic Theory Dynam. Systems, \textbf{27},(2007), 929--956.
%\bibitem{sch}
%M. Schraudner, \textit{One-dimensional projective subdynamics of uniformly mixing $\Z^d$ shifts of
%finite type}, preprint.
\bibitem{sp-mixing}
K. Sigmund, \textit{On dynamical systems with the  specification  property}, Transactions of the American Mathematical Society \textbf{190}, (1974), 285--299.
%\bibitem{synpro}
%T.K. Subrahmonian Moothathu, \textit{Syndetically proximal pairs}, J. Math. Anal. Appl. \textbf{379}, (2011), 656--663.
\bibitem{MO}
T.K. Subrahmonian Moothathu and P. Oprocha, \textit{Syndetic proximality and scrambled sets}, arXiv:1108.1280v4.
\bibitem{walters}
P. Walters, \textit{On the pseudo orbit tracing property and its relationship to stability},
Lecture Notes in Mathematics,  \textbf{668}, Springer-Verlag, (1987), 231--244.

\end{thebibliography}
\end{document}